\newtheorem{theorem}{Theorem}[section]
\newtheorem{lemma}[theorem]{Lemma}
\newtheorem{proposition}[theorem]{Proposition}
\def\eps{\varepsilon}
\newcommand\ds{ \displaystyle }
\newcommand\NN{\mathbb{N}}
\newcommand\RR{\mathbb{R}}
\newcommand\TT{\mathbb{T}}
\newcommand\bB{{\mathbf B}}
\newcommand\bE{{\mathbf E}}
\newcommand\bG{{\mathbf G}}
\newcommand\bU{{\mathbf U}}
\newcommand\bu{{\mathbf u}}
\newcommand\bw{{\mathbf w}}
\newcommand\ZZ{{{\rm Z}\kern-.28em{\rm Z}}}
\newcommand\ee{ \mathbf{e} }
\newcommand\xx{ \mathbf{x} }
\newcommand\vv{ \mathbf{v} }
\title[The Vlasov-Poisson system with strong magnetic field]{On the
  asymptotic limit of the three dimensional Vlasov-Poisson system for
  large magnetic field : formal derivation.}
\author{Pierre DEGOND and Francis FILBET}
\date{\today}
\def\signFF{\bigskip\bigskip\hspace{80mm}
\vbox{{\sc Francis Filbet\par\vspace{3mm}
Institut de Math\'ematiques de Toulouse,\par 
Universit\'e Toulouse III, \par Institut Universitaire de France, \par
118, route de Narbonne\par
F-31062 Toulouse cedex,  FRANCE
\par\vspace{3mm}e-mail:} francis.filbet@math.univ-toulouse.fr }}
\def\signPD{\bigskip\bigskip\hspace{80mm}
\vbox{{\sc Pierre Degond \par\vspace{3mm}
Department of Mathematics,\par
Imperial College of London,\par
London SW7 2AZ,\par
UNITED KINGDOM
\par\vspace{3mm}e-mail:}  pdegond@imperial.ac.uk }}
\begin{document}

\begin{abstract}
This paper establishes the long time asymptotic limit of the three
dimensional Vlasov–Poisson equation with strong external magnetic
field. The guiding center approximation is investigated in the three
dimensional case with a non-constant magnetic field.  In the long time
asymptotic limit, the motion can
be split in two parts : one stationary flow along the lines of the
magnetic field and the guiding center motion in the orthogonal plane
of the magnetic field where classical drift velocities are recovered. We discuss in particular the effect of nonconstant external magnetic fields. 
\end{abstract}

\maketitle

\vspace{0.1cm}

\noindent 
{\small\sc Keywords.}  {\small Vlasov-Poisson system; Guiding-centre
  model; Asymptotic expansion.}

\noindent 
{\small\sc AMS classification.}  {\small 78A35, 35Q60, 82D10}

\tableofcontents


\section{Introduction}
\label{sec:1}
We consider a plasma  confined by a strong external nonconstant magnetic
field, hence the charged gas evolves under its self-consistent
electrostatic field and the confining magnetic field.  This configuration is typical of a
tokamak plasma \cite{bellan_2006_fundamentals, miyamoto_2006_plasma}
where the magnetic field is used to confine particles inside the core
of the device.

We assume that on the time scale we consider, collisions can be
neglected  both for ions and electrons, hence collective effects are
dominant and the plasma is entirely modelled with kinetic transport
equations, where the unknown is the number density of particles $f\equiv
f(t,\xx,\vv)$  depending on time $t\geq 0$, position
$\xx\in\Omega\subset \RR^3$ and velocity $\vv\in\RR^3$. 

Such a kinetic model provides an appropriate description of turbulent transport 
in a fairly general context,  but it requires to solve a six dimensional problem which leads to a huge 
computational cost. 

To reduce the cost of numerical simulations, it is classical to derive asymptotic models
with a smaller number of variables than the kinetic description. Large
magnetic fields  usually lead to the so-called drift-kinetic limit
\cite{ant_lane_80,bri_hahm_07,haz_ware_78,haz_mei_03} and we refer to \cite{bostan_08,brenier_00,fre_son_97,fre_son_98,fre_rav_son_01,gol_lsr_99}
for recent  mathematical results on this topic. In this regime, due to the large applied magnetic
field, particles are confined along the magnetic field lines and their period of rotation
around these lines (called the cyclotron period) becomes small.  It corresponds to the finite Larmor radius scaling for the Vlasov–Poisson equation, which was introduced
by Fr\'enod and Sonnendrücker in the mathematical literature \cite{fre_son_97,fre_son_98}. The two-dimensional version of the system (obtained when
one restricts to the perpendicular dynamics) and the large magnetic
field limit  were studied in  \cite{fre_rav_son_01} and more recently in \cite{bostan_08,hauray,Daniel00}.
We also refer to the recent work \cite{hauray2} of Hauray and Nouri, dealing with the well-posedness theory with a diffusive
version of a related two dimensional system. A version of the full
three dimensional system describing ions with massless electrons was studied by the author in \cite{Daniel0,Daniel2}.

Here, we formally derive a new asymptotic model under both assumptions 
of large magnetic fields and large time asymptotic limit for the three
dimensional Vlasov-Poisson system. Analogous problem has already been
carefully studied by F. Golse and  L. Saint-Raymond in two dimension \cite{gol_lsr_99,laure0,gol_lsr_03}.

We consider a plasma model in which we focus on the
dynamics of the fast electrons and the magnetic field is assumed to be
given. 

In the large magnetic field regime, the Lorentz force term in the Vlasov equation
is scaled by a large parameter, $1/\varepsilon$, where $\varepsilon$ stands for
the dimensionless ion cyclotron period, {\it i.e.} the rotation period of the electrons about a 
magnetic field line (or Larmor rotation). The so called drift-kinetic or gyro-kinetic 
regimes are reached when $\varepsilon$ tends to zero (see
\cite{haz_ware_78,Lifshitz}). 

Here we are interested in the long time
behavior of the distribution of electrons since they can be considered
as fast particles compared to the characteristic velocity of electrons. In
this limit, the new distribution function  only depends on space, time
and two components of the velocity, corresponding to the parallel
component along the magnetic field line and the magnitude of the
perpendicular velocity. In other words, the distribution function is
independent of the gyro-phase of the perpendicular velocity in the
plane normal to the magnetic field line. This is a consequence of the
ultra-fast cyclotron rotation about the magnetic field lines. 

It is also convenient to express the distribution in terms of the parallel velocity and the magnetic moment or adiabatic invariant, which is proportional to the perpendicular energy divided by the magnitude of the magnetic field. 

Now, the distribution function in these new variables satisfies a
transport equation  with a constraint. A Lagrange multiplier allows to
express this constraint in the differential system. The constraint
expresses that the distribution function is constant along the
trajectories of the fast parallel motion along the magnetic field
lines. 

The derivation of the model roughly follows the following steps: we
first proceed with formal expansions of the distribution function with
respect to the parameter $\varepsilon$.  Now, carrying the Hilbert
expansion procedure through for the distribution function equation is
best done if we change the random velocity variable  into a coordinate
system consisting of the parallel velocity, the energy, and the angle
of rotation or gyrophase around the magnetic field line. 

Thanks to this coordinate change, we show that the leading order term
of the distribution function does not depend on the gyrophase. Next,
we realize that, at each level of the expansion, we are led to
inverting the gyrophase averaging operator
\cite{haz_ware_78,haz_mei_03}. We show that the inverse operator can
only act on functions satisfying a specific solvability condition,
namely that their gyrophase average is zero. We find the asymptotic
model following the classical Hilbert expansion procedure of kinetic
theory. Providing an explicit expression of the Hilbert expansion
procedure is achieved here assuming that the magnetic field only acts
in the $z$ direction. 

The remainder of the paper is organized as follows. In
Section~\ref{sec:2}, we present the scaling which expresses the
assumptions of strong magnetic field and long time asymptotic regime.
Then, we present and comment the main result of this article, namely the asymptotic model.
In Section~\ref{sec:3}, by using Hilbert
expansions we derive the asymptotic model and provide the main
computational steps which lead to the explicit partial differential
system for the limit distribution function.

\section{Scaling and main results}
\label{sec:2}


\subsection{The Vlasov equation in a strong magnetic field}
\label{Vlasov}

We are interested in the dynamics of a single species negatively
charged fast electrons in the plasma. At this stage of the study, 
the coupling with the ions is discarded and the electric field
is given by the Poisson equation whereas the magnetic field is external.

We investigate the asymptotic limit of the Vlasov equation describing
the long time dynamics of the electrons when
they are submitted to an asymptotically large external magnetic field.

Denoting by $m$ the electron mass and by $q$ the negative charge of the electron,
we start from the Vlasov equation
\begin{equation}\label{V_f}
\frac{\partial f}{\partial t} + \vv\cdot\nabla_\xx f +
\frac{q}{m}(\bE\,+\,\vv\times \bB_{\rm ext})\cdot\nabla_\vv f =0,
\end{equation}
where $f\equiv f(t,\xx,\vv)$ is the distribution function and $\xx\in \Omega \subset \RR^3$,
$\vv\in\RR^3$, and $t\in\RR^+$ are respectively the position, velocity, and
time variables. 

Then, we  prescribe an initial datum 
\begin{eqnarray}
f(0,\xx,\vv) = f_{\rm in}(\xx,\vv), \quad \xx \in  \Omega,   \quad \vv\in\RR^3
\label{VIC}
\end{eqnarray}
where $f_{\rm in}$ is the distribution function of particles initially present inside the domain $\Omega$.  

Next, we introduce a set of characteristic scales from which an appropriate
scaling of equation \eqref{V_f} will be derived.

The characteristic length scale of the problem $\overline x$   is the Debye length 
$$
\lambda_D\,\,=\,\,\left(\frac{k_{\mathcal B}\overline{T}}{4\pi\,\overline{n} \,q^2}\right)^{1/2}
$$ 
where $k_{\mathcal{B}}$ is the Boltzmann
constant, $\overline{T}$ is the temperature scale and $\overline{n}$
is the density scale. Then, the characteristic magnitude of the
electric  field can be expressed from $\overline n$ and $\overline x$
by $\overline{E}=4\pi q^2 \overline{n} \,\overline{x} $ and the characteristic
velocity of electrons $\overline{v}$ is the thermal
velocity of the electrons, $v_{th}=(k_{\mathcal
  B}\overline{T}/m)^{1/2}$. 

Therefore, the plasma frequency of electrons satisfies
$$
\omega_p^{-1} = \frac{\overline x}{\overline v},
$$
which corresponds to one time scale.  Moreover,  we  denote by
$\overline{B}$ the characteristic magnitude of the applied magnetic
field and define $\omega_c=\frac{q\overline{B}}{m}$ the characteristic electron
cyclotron frequency, and $\omega_c^{-1}$ corresponds to a second time scale.

Hence we define the new variables and given fields by 
$$
\xx'=\frac{\xx}{\overline x},\quad\vv'=\frac{\vv}{\overline{v}},\quad t'=\frac{t}{\overline{t}},\quad \bE'(t',\xx')=
\frac{\bE(t,\xx)}{\overline{E}},\quad\bB'_{\rm ext}(t',\xx')=\frac{\bB_{\rm ext}(t,\xx)}{\overline{B}}.
$$
Subsequently, letting $\overline{f}=\overline{n}/\overline{v}^3$ the distribution function scale, we
introduce the new unknown $$
f'(t',\xx',\vv')=\frac{f(t,\xx,\vv)}{\overline{f}}.
$$
Inserting all these changes into \eqref{V_f}, dividing by $\omega_p$ and dropping the primes
for clarity, we obtain the dimensionless equation
\begin{equation}
\label{primes}
\frac{1}{\omega_p\overline t}\,\frac{\partial f}{\partial t} \,+\,\vv\cdot\nabla_{\xx}f\,+\,\left(\bE\,+\,\frac{\omega_c}{\omega_p}
\,\vv\times \bB_{\rm ext}\right)\cdot\nabla_{\vv} f \,=\, 0.
\end{equation}
When the external magnetic field is assumed to be large, 
the rotation period of the electrons about the magnetic field
lines becomes small.  We introduce the
dimensionless cyclotron period 
$$
\varepsilon \,=\,\frac{\omega_p}{\omega_c}
$$
and since we are interested in asymptotically large time scale, we also have that 
$$
\varepsilon \,=\,\frac{1}{\overline t\,\omega_p}
$$
Then, under this scaling, the Vlasov equation \eqref{primes} for
$f=f_\varepsilon$ takes the form:
\begin{equation}
\label{V_varepsilon_f0}
\varepsilon\,\frac{\partial f^\varepsilon}{\partial t}\,+\,\vv \cdot \nabla_\xx f^\varepsilon \,+\,
\left(\bE\,+\,\frac{1}{\varepsilon}\vv\times \bB_{\rm ext}\right) \cdot \nabla_\vv f^\varepsilon\,=\,0,
\end{equation}
with initial conditions still given by (\ref{VIC}). 

\subsection{Assumptions and main result}
\label{ssec_main}

To simplify the presentation and the following calculation, we assume
that $\Omega=\RR^3$ and the external magnetic field only applies in the $z$-direction 
\begin{equation}
\label{hyp:B0}
\bB_{\rm ext}(t,\xx) = (0,0,b(t,\xx_\perp))^t
\end{equation}
where $\xx=(x,y,z)^t\in\RR^3$ with $\xx_\perp=(x,y)$ and
$x_\parallel=z$. The velocity variable will be denoted in the same
manner $\vv=(\vv_\perp,v_\parallel)$, with $\vv_\perp=(v_x,v_y)$ and
$v_\parallel=v_z$. 

Since the external magnetic field must satisfy the Gauss's law
for magnetism
$$
\nabla_\xx\cdot \bB_{\rm ext} = 0,
$$
it gives that indeed $b$ only depends on $x_\perp\in\RR^2$ and $t\in\RR^+$. Furthermore, we assume that $b$ does not vanish and is smooth: there
exists $\alpha>0$ such that 
\begin{equation}
\label{hyp:B1}
b\in W^{1,\infty}(\RR^+\times\RR^2), \quad  b(t,\xx_\perp) > \alpha.
\end{equation}

Under these assumptions, the Vlasov equation \eqref{V_varepsilon_f0} can
be written in a simple form, which allows us to ignore curvature effects 
\begin{equation}
\label{V_varepsilon_f}
\left\{
\begin{array}{l}
\displaystyle \varepsilon\,\frac{\partial f^\varepsilon}{\partial t}\,+\,\vv \cdot \nabla_\xx f^\varepsilon \,+\,
\left(\bE^\varepsilon\,+\,\frac{b}{\varepsilon}\vv^\perp\right) \cdot \nabla_\vv f^\varepsilon\,=\,0,
\\
\,
\\
\ds -\Delta \phi^\varepsilon = \rho^\varepsilon = \int_{\RR^3} f^\varepsilon d\vv, \quad \bE^\varepsilon = -\nabla_\xx
\phi^\varepsilon,
\\
\,
\\
f^\varepsilon(0) \,=\, f^\varepsilon_{\rm in}, 
\end{array}\right.
\end{equation}
where the operator $\,^\perp$ corresponds to a rotation of $-\pi/2$
along the axis $(0z)$ and therefore it only acts on
the $\vv_\perp=(v_x,v_y)$ component and keeps the third component
identical :  for
any $\vv=(\vv_\perp,v_\parallel)\in\RR^3$, we have
$\vv^\perp=(v_y,-v_x,v_z)$.

Let us first emphasize that applying the arguments  of A.A. Arsen'ev
\cite{arsenev_4} and
R. DiPerna and P.-L. Lions \cite{lions0_4}, we easily prove the existence of weak solutions  for any
$\varepsilon >0$.

\begin{theorem}
\label{th:0}
Assume the magnetic field satisfies (\ref{hyp:B0})-(\ref{hyp:B1}) and
the initial datum $f_{\rm in}^\varepsilon$  is a  nonnegative function such that
\begin{equation}
\label{hyp:f1}
f_{\rm in}^\varepsilon \in L^1\cap L^{\infty}(\RR^3\times \RR^3), \quad
\frac{\xx}{|\xx|^3}\star\int_{\RR^3} f_{\rm in}^\varepsilon (\xx,\vv)\,d\vv \in L^2(\RR^3),
\end{equation}
and has  finite kinetic energy
\begin{equation}
\label{hyp:f2}
\frac{1}{2}\int_{\RR^3\times\RR^3}
{\|\vv\|^2}\,f_{\rm in}^\varepsilon\,d\xx\,d\vv < \infty.
\end{equation}
Then there is a weak solution $(f^\eps,\bE^\eps)$ to the Vlasov-Poisson system
(\ref{V_varepsilon_f}), where $f^\eps \in
L^{\infty}(\RR^+, L^1\cap L^{\infty}(\RR^6))$,  the charge densities $\rho^\eps$ is such that 
$$
\rho^\eps\in L^{\infty}(\RR^+; L^{5/3}(\RR^3))
$$
and 
$$
\bE^\eps\in L^{\infty}(\RR^+;L^2\cap W^{1,5/3}(\RR^3)).
$$
\end{theorem}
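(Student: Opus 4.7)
The plan is to adapt the classical existence theory of Arsen'ev \cite{arsenev_4} and DiPerna--Lions \cite{lions0_4} for the Vlasov--Poisson system to the magnetized setting \eqref{V_varepsilon_f}. At fixed $\eps>0$, the factor $\eps$ in front of $\partial_t f^\eps$ is irrelevant and the magnetic drift $(b/\eps)\,\vv^\perp$ enters linearly in $f^\eps$, is divergence--free in $\vv$, and is orthogonal to $\vv$; together with the assumption $b\in W^{1,\infty}$, this ensures that none of the standard a priori estimates of Vlasov--Poisson is affected by the magnetic contribution.

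First I would regularize the Poisson coupling, for instance by replacing the Newtonian kernel by a mollified version $\chi_n\star\chi_n\star(\xx/|\xx|^3)$, producing a sequence of smooth global solutions $f_n^\eps$ by the method of characteristics. The key step is then to derive bounds uniform in $n$. Since both $\bE_n^\eps$ and $b\vv^\perp$ have vanishing $\vv$--divergence, the characteristic flow preserves Lebesgue measure in $(\xx,\vv)$, so
\[
\|f_n^\eps(t)\|_{L^p(\RR^6)}=\|f^\eps_{\rm in}\|_{L^p(\RR^6)}, \qquad p\in[1,\infty].
\]
Multiplying \eqref{V_varepsilon_f} by $|\vv|^2/2$ and integrating yields conservation of the total energy: the magnetic term disappears because $\vv\cdot\vv^\perp=0$, and the electric contribution is absorbed into $\tfrac12\|\bE_n^\eps\|_{L^2}^2$ via the continuity equation. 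Assumptions \eqref{hyp:f1}--\eqref{hyp:f2} guarantee finite initial energy, so $\bE_n^\eps$ is uniformly bounded in $L^\infty_t L^2_\xx$. The classical interpolation
\[
\rho_n^\eps(\xx)\ \lesssim\ \|f_n^\eps\|_{L^\infty}^{2/5}\Bigl(\int_{\RR^3}|\vv|^2 f_n^\eps(\xx,\vv)\,d\vv\Bigr)^{3/5}
\]
upgrades this to $\rho_n^\eps\in L^\infty_t L^{5/3}_\xx$, and Calder\'on--Zygmund regularity then yields $\bE_n^\eps\in L^\infty_t W^{1,5/3}_\xx$.

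The genuine difficulty, exactly as in the unmagnetized case, is passing to the limit in the nonlinear product $\bE_n^\eps\cdot\nabla_\vv f_n^\eps$. I would resolve it by the DiPerna--Lions velocity--averaging lemma applied to the transport equation for $f_n^\eps$: the uniform bounds above imply that the moments $\rho_n^\eps=\int f_n^\eps\,d\vv$ are compact in $L^q_{\rm loc}$ of space--time for some $q>1$. Strong convergence of $\rho_n^\eps$ propagates to $\bE_n^\eps$ via Calder\'on--Zygmund, and together with the weak-$\ast$ convergence of $f_n^\eps$ in $L^\infty$ this suffices to identify the nonlinear limit. The magnetic drift is linear in $f_n^\eps$ with the fixed smooth coefficient $b$, so its passage to the limit is immediate; one then recovers a weak solution $(f^\eps,\bE^\eps)$ with exactly the regularity announced in the theorem.
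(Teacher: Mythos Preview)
Your proposal is correct and follows exactly the approach the paper indicates: the paper does not give a detailed proof of this theorem but simply states that ``applying the arguments of A.A.~Arsen'ev \cite{arsenev_4} and R.~DiPerna and P.-L.~Lions \cite{lions0_4}, we easily prove the existence of weak solutions for any $\varepsilon>0$,'' and then notes that the $L^p$ and energy estimates are uniform in $\varepsilon$. Your sketch fleshes out precisely these arguments---$L^p$ conservation from the divergence-free structure, energy conservation using that the Lorentz force does no work, the $L^{5/3}$ interpolation for $\rho^\eps$, Calder\'on--Zygmund for $\bE^\eps$, and velocity averaging for compactness---so you have supplied the details the paper omits, with the same strategy.
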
 

Note here that the $L^p$ and energy estimates  hold uniformly with
respect to $\varepsilon>0$. Furthermore, this results strongly relies
on the energy estimate, which is uniform with respect to $\eps>0$. We
define the total energy associated to \eqref{V_varepsilon_f}
$$
\mathcal{E}^\eps(t) := \int_{\RR^6} \frac{|\vv|^2}{2}\, f^\eps(t)
\,d\xx\,d\vv \;\;+\;\;   \frac{1}{2}\,\int_{\RR^3} |\bE^\eps|^2 \,d\xx
\,\,\leq\,\,\mathcal{E}^\eps(0).
$$

The aim of this paper is then to obtain a systematic expansion of
Hilbert type of the function  $(f^\varepsilon,\bE^\varepsilon)$
solution to the Vlasov-Poisson system \eqref{V_varepsilon_f} and to study the asymptotic model formally obtained by taking the limit
$\varepsilon \to 0$. 


Let us assume that $(f^\varepsilon,\bE^\varepsilon)$ can be written as 
\begin{equation}
\label{exp:01}
f^\varepsilon \,=\, \sum_{k\in\NN} \varepsilon^k \, f_k,
\quad  \bE^\varepsilon \,=\, \sum_{k\in\NN} \varepsilon^k \, \bE_k,
\end{equation}
where for any $k\in\NN$, $f_k$ and $\bE_k$ do not depend on
$\varepsilon$. The existence of such an expansion would guarantee that
$f^\varepsilon$ and $\bE^\varepsilon$ and their derivatives with
respect to $\xx$ and $\vv$ are uniformly bounded, at least if the
functions $f_k$, $\bE_k$ are sufficiently smooth. 

In particular we assume   that 
\begin{equation}
\label{hyp:f3}
f^\varepsilon_{\rm in}  = \sum_{k\in\NN} \varepsilon^k \, f_{{\rm
    in}, k},
\end{equation}
where for any $k\in\NN$, $f_{{\rm in}, k}$ does not depend on $\varepsilon>0$.

To introduce the gyroaveraging operator in the orthogonal plane to
magnetic field,  we will work in polar
coordinate  for $\vv_\perp=(v_x,v_y)\in\RR^2$, 
\begin{equation}
\label{c:polar}
\left\{
\begin{array}{l}
\ds v_x \, =\,  w \,\cos(\theta),
\\
\ds v_y \, =\,  w \,\sin(\theta),
\end{array}\right.
\end{equation}
and set   $\ee_w = (\cos\theta,\sin\theta)$, $\ee_{\theta}=-\ee_w^\perp=(-\sin\theta,\cos\theta)$.
Then we introduce the gyroaveraging operator
$\Pi$ defined for every function $f(\vv)$ by
\begin{equation}\label{def_gyromoy}
\Pi f\,(w,v_\parallel) \,=\,\frac1{2\pi} \int_{0}^{2\pi}  f(\vv_\perp,
v_\parallel) d\theta,
\end{equation}
where in the integral $\vv_\perp$ is expressed thanks to the change of
coordinate (\ref{c:polar}). 

\begin{theorem}[Formal expansion of $f ^\varepsilon$ and $\bE ^\varepsilon$]
\label{th:01}
 Let us consider an external the magnetic field  such that
 (\ref{hyp:B0})-(\ref{hyp:B1}) and   $f_{\rm in}^\varepsilon$  a  nonnegative function
satisfying (\ref{hyp:f1}), (\ref{hyp:f2}) and (\ref{hyp:f3}). Assume
there  exists a sequence $(f_k,\bE_k)_{k\in\NN}$ such that the
weak solutions $(f^{\eps},\bE^{\eps})_\eps$  to the Vlasov-Poisson system
(\ref{V_varepsilon_f}) can be expanded as (\ref{exp:01}) for
all $\varepsilon>0$. 
Then,  
$$
\left\{
\begin{array}{lll}
\bE_0&\equiv& \bE_F(t,\xx),
\\
f_0&\equiv& F(t,\xx,w,v_\parallel)
\end{array}\right.
$$ 
and there exists $(P,\bE_P)$ with $P\equiv P(t,\xx,w,v_\parallel)$
such that   $(F,P,\bE_F,\bE_P)$ is solution to the
following  system 
\begin{equation}
\label{transport:F}
\left\{
\begin{array}{l}
\ds\frac{\partial F}{\partial t}\,+ \,
\bU_\perp\cdot\nabla_{\xx_\perp}F
\,+\,u_w\,\frac{\partial
  F}{\partial w} \,-\,  \frac{\partial \phi_P}{\partial x_\parallel} \frac{\partial F}{\partial
  v_\parallel} 
\,+\,  v_\parallel\frac{\partial P}{\partial x_\parallel}
\,-\,  \frac{\partial \phi_F}{\partial x_\parallel} \frac{\partial P}{\partial v_\parallel}\,=\,0,
\\
\, 
\\
\ds v_\parallel \frac{\partial F}{\partial x_\parallel} 
\,-\,  \frac{\partial \phi_F}{\partial x_\parallel} \frac{\partial F}{\partial v_\parallel} = 0,
\\
\, 
\\
\ds F(0) = \Pi f_{{\rm in}, 0}, \,\, P(0) = \Pi f_{{\rm in}, 1},
\end{array}\right.
\end{equation}
where $\bU_\perp$ corresponds to the drift velocity and
$(\bU_\perp,u_w)$ is given by
\begin{equation}
\label{UperpW}
\bU_\perp = -\frac{1}{b} \left( \nabla_{\xx_\perp}\phi_F \,+\, \frac{w^2}{2b}
  \nabla_{\xx_\perp} b  \right)^\perp, 
\quad
u_w\,=\, \frac{w}{2b^2}\,\nabla_{\xx_\perp}^\perp b\cdot\nabla_{x_\perp}\phi_F,  
\end{equation}
and the electric fields $\bE_F$, $\bE_P$ are such that $\bE_Q=-\nabla\phi_Q$, for $Q=F,P$
and $\phi_Q$ solves the Poisson equation with source terms $\rho_Q$,
\begin{equation}
\label{poisson:i}
\ds -\Delta \phi_Q = \rho_Q:= 2\pi\int_{\RR^+\times\RR} Q(t,\xx,w,v_\parallel) \,w\,dw\, dv_\parallel, \quad Q=F,P.
\end{equation}
Moreover, the following relation holds
$$
f_1 =  -\frac{1}{b(t,\xx_\perp)}\,\, {\bf e}_\theta\cdot
  \left( w\nabla_{\xx_\perp} F \,+\, \bE_\perp \frac{\partial F}{\partial w} \right)  \,\,+\,\, P(t,\xx,r,v_\parallel).
$$
\end{theorem}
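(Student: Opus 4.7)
The plan is to substitute the Hilbert expansion \eqref{exp:01} into the rescaled Vlasov equation, match like powers of $\varepsilon$, and at each order extract a gyroaverage constraint (via a solvability condition) together with the next-order corrector. This is the classical Hilbert procedure for the singular operator $\frac{b}{\varepsilon}\vv^\perp\!\cdot\nabla_\vv$. The key preliminary step is to pass to the cylindrical velocity coordinates $(w,\theta,v_\parallel)$ of \eqref{c:polar}: using $\vv_\perp = w\,\ee_w$ and $\nabla_{\vv_\perp} = \ee_w\partial_w + w^{-1}\ee_\theta\partial_\theta$, a direct computation yields $\vv^\perp\!\cdot\nabla_\vv = -\partial_\theta$, so that \eqref{V_varepsilon_f} reads
$$
\varepsilon\,\partial_t f^\varepsilon + \vv\cdot\nabla_\xx f^\varepsilon + \bE^\varepsilon\cdot\nabla_\vv f^\varepsilon - \frac{b}{\varepsilon}\,\partial_\theta f^\varepsilon = 0.
$$
The Poisson equation, being linear in $f^\varepsilon$, decouples term by term as $-\Delta\phi_k = \int f_k\,d\vv$, $\bE_k = -\nabla_\xx\phi_k$.

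At order $\varepsilon^{-1}$ I obtain $b\,\partial_\theta f_0 = 0$, hence by \eqref{hyp:B1} $f_0 = F(t,\xx,w,v_\parallel)$, and consequently $\bE_0=\bE_F(t,\xx)$. At order $\varepsilon^0$ the equation becomes
$$
b\,\partial_\theta f_1 = \vv\cdot\nabla_\xx F + \bE_F\cdot\nabla_\vv F.
$$
Since the left-hand side has zero gyroaverage, applying $\Pi$ gives the solvability condition. Because $\Pi(\ee_w) = 0$ and $F$ is $\theta$-independent, only the parallel contributions survive, yielding the parallel-flow constraint
$$
v_\parallel\,\partial_{x_\parallel} F - \partial_{x_\parallel}\phi_F\,\partial_{v_\parallel} F = 0,
$$
which is the second equation of \eqref{transport:F}. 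Subtracting this constraint from the right-hand side and integrating in $\theta$ via $\partial_\theta(-\ee_\theta)=\ee_w$ determines $f_1$ up to its gyroaverage, and reproduces the announced formula
$$
f_1 = -\frac{1}{b}\,\ee_\theta\cdot\bigl(w\nabla_{\xx_\perp}F + \bE_\perp\,\partial_w F\bigr) + P(t,\xx,w,v_\parallel),
$$
where $P := \Pi f_1$ is so far free.

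At order $\varepsilon^1$ one gets $b\,\partial_\theta f_2 = \partial_t f_0 + \vv\cdot\nabla_\xx f_1 + \bE_0\cdot\nabla_\vv f_1 + \bE_1\cdot\nabla_\vv f_0$, and the solvability condition $\Pi(\mathrm{RHS}) = 0$ produces the first equation of \eqref{transport:F}. Several contributions are immediate: $\Pi(\partial_t f_0) = \partial_t F$; the $P$-part of $f_1$ contributes $v_\parallel\,\partial_{x_\parallel}P - \partial_{x_\parallel}\phi_F\,\partial_{v_\parallel}P$ by the same parity argument as at the previous order; and $\Pi(\bE_1\cdot\nabla_\vv f_0) = -\partial_{x_\parallel}\phi_P\,\partial_{v_\parallel} F$, since $\bE_1 = \bE_P$ by the linearity of Poisson applied to $f_1$ (whose gyroaverage is $P$). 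The real work is to gyroaverage the oscillating piece $g := -\tfrac{1}{b}\ee_\theta\cdot A$, with $A := w\nabla_{\xx_\perp}F + \bE_\perp\partial_w F$, against $\vv\cdot\nabla_\xx$ and $\bE_F\cdot\nabla_\vv$. The core bilinear identity is, for $\theta$-independent $X,Y\in\RR^2$,
$$
\Pi\bigl((\ee_w\cdot X)(\ee_\theta\cdot Y)\bigr) = \tfrac{1}{2}(X_1 Y_2 - X_2 Y_1),
$$
which, applied with $X$ running over $\nabla_{\xx_\perp}$ and over $\bE_{F,\perp}$ and $Y = A/b$, assembles exactly the drift velocity $\bU_\perp$ and the $w$-drift $u_w$ of \eqref{UperpW}. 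The initial conditions $F(0)=\Pi f_{\rm in,0}$ and $P(0)=\Pi f_{\rm in,1}$ follow by projecting \eqref{hyp:f3} at orders $\varepsilon^0$ and $\varepsilon^1$.

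The main technical obstacle is precisely this last bookkeeping: $g$ depends on $\xx_\perp$ both through $F$ and through the prefactor $1/b(t,\xx_\perp)$, so when $\vv\cdot\nabla_\xx$ hits $g$, derivatives of $1/b$ combine with the angular factors to generate the grad-$B$ drift $\propto w^2\nabla_{\xx_\perp}b/(2b^2)$, which must be carefully tracked against the $\bE\times\bB$-type drift to recover the stated form of $(\bU_\perp, u_w)$. Once these gyroaverages are assembled, summing with the immediate terms above yields exactly the first equation of \eqref{transport:F}, completing the derivation of the limiting system.
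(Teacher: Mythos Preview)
Your proposal is correct and follows essentially the same Hilbert-expansion route as the paper: identify $f_0$ from the kernel of $b\,\partial_\theta$, extract the parallel constraint as the solvability condition at the next order and integrate in $\theta$ to obtain $f_1$, then derive the drift equation for $F$ from the solvability condition one order higher. The only cosmetic difference is that the paper packages the gyroaverage computation through an auxiliary vector $\bG = w\,\nabla_{\xx_\perp}F - \nabla_{\xx_\perp}\phi_F\,\partial_w F$ and expands term by term, rather than invoking your bilinear identity $\Pi\bigl((\ee_w\!\cdot X)(\ee_\theta\!\cdot Y)\bigr) = \tfrac{1}{2}(X_1Y_2 - X_2Y_1)$, but the underlying calculation is identical.
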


Let us emphasize that the first equation of \eqref{transport:F} means that $F$ and $P$ do not depend 
of the gyrophase $\theta\in [0,2\pi]$, but on $w=\|\vv_\perp\|$. In this model, $F$ is
determined by  \eqref{transport:F} while the unknown function $P$
plays the role  of the Lagrange multiplier associated to the
constraint in \eqref{transport:F}. This constraint reflects the fact that the fast parallel motion along the magnetic
field line is  instantaneously relaxed and $F$ is constant along these
trajectories. In other words, there are three time scales for a
particle moving in a large magnetic field:
\begin{itemize}
\item the fastest time scale corresponds to
the cyclotron or Larmor rotation period about the magnetic field. This
time scale is eliminated here by averaging over $\theta\in (0,2\pi)$;

\item  the second fastest scale  is the scale of the parallel motion along the magnetic
field line, which is described here by the constraint in
(\ref{transport:F});

\item the slow time scale corresponds to the various drifts across the magnetic field lines, due to spatio-temporal
variations of the electromagnetic field. In the
system (\ref{transport:F}), we focus on the slow time scale, which
corresponds to the large time behavior of the solution to the
Vlasov-Poisson system  (\ref{V_varepsilon_f}). 
\end{itemize}

Of course these various drifts are often obtained directly on the
particle trajectories, but the averaging effect is difficult to
justify and is only valid  for slowly varying electromagnetic
fields. The use of the kinetic model directly provides a way to do it
by imposing constraints on the distribution function. This easier derivation reflects the fact that, to some extent, the
distribution function describes the particle dynamics in a statistical
sense. Averaging the trajectories over some fast component is best done by looking at the
evolution of an observable of the system which is constant over this fast motion.


\section{Fundamental properties of the asymptotic model}
\label{sec:2.bis}

In this section, we prove some fundamental properties satisfied by the
asymptotic model (\ref{transport:F}), which illustrates the physical validity of the present
approach. In the following, we show that
\begin{itemize}
\item we recover the classical drift velocity $\bE\times \bB$ and the
  gradient drift velocity;
\item the asymptotic model satisfies conservation of energy;
\item the magnetic moment   is an invariant of the asymptotic model. 
\end{itemize}

\subsection{Drift velocities}
The drift velocity $\bU_\perp$ corresponds to the sum of classical  guiding
center drift $\bU_{\bE}$ and  $\bU_{\nabla\bB}$. Indeed,  the external
magnetic field only acts on the $z$-direction, $\bB_{\rm
  ext}=(0,0,b)$, then we first recover the drift velocity called $\bE\times \bB$ 
$$
\bU_{\bE} \,\,=\,\, \frac{\bE \times\bB_{\rm ext}}{\|\bB_{\rm ext}\|^2} \,\,=\,\,
-\frac{\nabla_{\xx_\perp}^\perp\phi_F}{b}.  
$$ 
and the so called gradient-$B$ drift,
$$
\bU_{\nabla\bB} \,\,=\,\, \frac{\|\vv_\perp\|^2}{2}\,\, \frac{\bB_{\rm ext}\times
  \nabla\left|\bB_{\rm ext}\right|}{\|\bB_{\rm ext}\|^3} \,\,=\,\, -\frac{w^2}{2}\,
\frac{\nabla^\perp_{\xx_\perp} b}{b^2}. 
$$
Here there is no curvature drift since we considered this simple
external magnetic field $\bB_{\rm
  ext}=(0,0,b)$.

\subsection{$L^p$ norms and total energy conservation}

Let us first write (\ref{transport:F})  in a conservative form
\begin{eqnarray}
\nonumber
&&\frac{\partial}{\partial t}\left( w\,F\right)\,+ \,
\nabla_{\xx_\perp}\cdot\left( w\,\bU_\perp \,F\right)
\,+\, \frac{\partial}{\partial w}\left(w\,u_w \,F\right) \, \,-\,\,  \frac{\partial }{\partial
  v_\parallel} \left(\frac{\partial\phi_P}{\partial{x_\parallel}} \,w\,F\right)
\\
&&\,+\,  \frac{\partial}{\partial x_\parallel}\left( w\,v_\parallel \,P\right)\,-\,
\frac{\partial }{\partial v_\parallel}\left( w\,\frac{\partial\phi_F}{\partial{x_\parallel}}  \,P\right)\,=\,0.
\label{conserv:F}
\end{eqnarray}
Thus, we prove the following conservation property. 
\begin{proposition}
\label{prop:3.1}  
Let  $f_{\rm in}^\varepsilon$  be a  nonnegative function satisfying
(\ref{hyp:f1})-(\ref{hyp:f2}) and (\ref{hyp:f3}). Assume that the
limiting system  (\ref{transport:F}) has a smooth solution $(F,\bE_F)$
and $(P,\bE_P)$. Then,  for any $\Theta\in \mathcal{C}^1(\RR)$ such that
$$
\int_{\RR^3\times\RR^+\times\RR} \Theta(F(0)) \,w\,dw\,dv_\parallel \,d\xx\,<\,+\infty,
$$
we have
$$
\int_{\RR^3\times\RR^+\times\RR} \Theta(F(t)) \,w\,dw\,dv_\parallel \,d\xx \,\;=\;\, \int_{\RR^3\times\RR^+\times\RR} \Theta(F(0)) \,w\,dw\,dv_\parallel \,d\xx.
$$ 
\end{proposition}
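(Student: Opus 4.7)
\medskip
\noindent\textbf{Plan of proof.}

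The strategy is to differentiate the integral in time, substitute the evolution equation for $F$, and use both the constraint (second line of \eqref{transport:F}) and a divergence-free identity satisfied by the drift field to eliminate every term. Assuming $F$ and $P$ are sufficiently smooth with decay at infinity in $\vv$, I first compute
\begin{equation*}
\frac{d}{dt}\int_{\RR^3\times\RR^+\times\RR} \Theta(F)\,w\,dw\,dv_\parallel\,d\xx
\,=\, \int_{\RR^3\times\RR^+\times\RR} \Theta'(F)\,\partial_t F \,\, w\,dw\,dv_\parallel\,d\xx,
\end{equation*}
and replace $\partial_t F$ using the first equation in \eqref{transport:F}. The resulting expression splits into two blocks: a transport block involving $(\bU_\perp,u_w,-\partial_{x_\parallel}\phi_P)$ acting on $F$, and a source block $-\Theta'(F)\mathcal{L}P$, where $\mathcal{L} := v_\parallel\partial_{x_\parallel} - \partial_{x_\parallel}\phi_F\,\partial_{v_\parallel}$ is precisely the parallel transport operator appearing in the constraint.

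For the transport block, I use the chain rule $\Theta'(F)\bU_\perp\cdot\nabla_{\xx_\perp} F = \bU_\perp\cdot\nabla_{\xx_\perp}\Theta(F)$, and similarly for $\partial_w F$ and $\partial_{v_\parallel}F$, then integrate by parts in $\xx_\perp$, $w$ and $v_\parallel$. The $v_\parallel$-piece vanishes immediately because $\phi_P$ is independent of $v_\parallel$, so $\partial_{v_\parallel}\phi_P = 0$ and the boundary terms vanish by decay. The $(\xx_\perp,w)$-piece reduces, after integration by parts against the weight $w$, to
\begin{equation*}
-\int \Theta(F)\,\bigl[w\,\nabla_{\xx_\perp}\!\cdot\bU_\perp \,+\, \partial_w(w\,u_w)\bigr]\,dw\,dv_\parallel\,d\xx.
\end{equation*}
The key computation is then to verify the divergence-free identity
\begin{equation*}
w\,\nabla_{\xx_\perp}\!\cdot\bU_\perp \,+\, \partial_w(w\,u_w)\,=\,0,
\end{equation*}
which follows from the explicit formulae \eqref{UperpW}: the $\bE\times\bB$ part of $\bU_\perp$ contributes $-\frac{w}{b^2}\nabla_{\xx_\perp}^\perp b\cdot\nabla_{\xx_\perp}\phi_F$, the gradient-$\bB$ part is divergence-free because $\nabla\cdot[b^{-2}(\nabla b)^\perp]=0$, and $\partial_w(w\,u_w)$ contributes exactly the opposite of the $\bE\times\bB$ defect. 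This is the same identity that makes the conservative reformulation \eqref{conserv:F} consistent with the non-conservative form, so its verification is essentially built into the paper's own rewriting.

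For the source block, I exploit the constraint $\mathcal{L}F = 0$. Since $\mathcal{L}$ is a first-order differential operator in $(x_\parallel,v_\parallel)$ alone, one has $\mathcal{L}\bigl(\Theta'(F)\bigr) = \Theta''(F)\,\mathcal{L}F = 0$, hence
\begin{equation*}
\Theta'(F)\,\mathcal{L}P \,=\, \mathcal{L}\bigl(\Theta'(F)\,P\bigr).
\end{equation*}
Because $\mathcal{L}$ is divergence-free in $(x_\parallel,v_\parallel)$ (the coefficient $v_\parallel$ is independent of $x_\parallel$ and $\partial_{x_\parallel}\phi_F$ is independent of $v_\parallel$), integration in $x_\parallel$ and $v_\parallel$ with decay boundary conditions annihilates this term as well. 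Combining the two blocks gives the claimed conservation. I expect the main obstacle to be bookkeeping rather than ideas: one must take care with the weight $w$ (which is the Jacobian of the change to polar coordinates and vanishes at $w=0$, so boundary terms there must be handled by the factor $w\,u_w$ vanishing at $w=0$), and one must justify the integration by parts under suitable decay assumptions on $F$ and $P$, which are guaranteed by the smoothness hypothesis in the proposition.
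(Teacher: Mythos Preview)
Your proof is correct and follows essentially the same route as the paper: multiply the evolution equation by $\Theta'(F)$, integrate against the weight $w$, use the divergence identity $w\,\nabla_{\xx_\perp}\!\cdot\bU_\perp + \partial_w(w\,u_w)=0$ (the paper records the equivalent formula $w\,\nabla_{\xx_\perp}\!\cdot\bU_\perp = -\tfrac{w}{b^2}\nabla_{\xx_\perp}^\perp b\cdot\nabla_{\xx_\perp}\phi_F$), and eliminate the $P$-terms via the constraint, which the paper phrases as $\mathcal{L}\Theta'(F)=0$ and you phrase via $\Theta'(F)\,\mathcal{L}P=\mathcal{L}(\Theta'(F)P)$. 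The two presentations differ only in bookkeeping.
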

\begin{proof}
Assuming that $F$ is a smooth solution to (\ref{transport:F}) together
with  a smooth $P$, we multiply (\ref{transport:F}) by $\Theta^\prime(F)$ and integrate with
respect to $(\xx,w,v_\parallel)\in\RR^3 \times \RR^+\times\RR$. Then,
using that
$$
w\,\nabla_{\xx_\perp}\cdot\bU_\perp \,\,=\,\,
-\frac{w}{b^2}\,\nabla_{\xx_\perp}^\perp b \cdot \nabla_{\xx_\perp}\phi_F
$$
and observing that
$$
v_\parallel \frac{\partial \Theta^\prime(F)}{\partial x_\parallel} \,-\,
\frac{\partial \phi_F}{\partial x_\parallel} \,\frac{\partial \Theta^\prime(F)}{\partial v_\parallel} = 0,
$$
we easily get after a simple integration by part and for suitable
boundary conditions (either periodic or vanishing property in the whole space)
$$
\frac{d}{dt} \int_{\RR^3\times\RR^+\times\RR} \Theta(F(t))
\,w\,dw\,dv_\parallel \,d\xx \,=\, 0,
$$ 
hence the results follows.
\end{proof}

From  Proposition \ref{prop:3.1}, we get classical $L^p$ estimates, for any
$1\leq p\leq \infty$, on the
distribution function $F$ :  for all $t\in\RR^+$ 
\begin{equation}
\label{Lp:F}
\|F(t)\|_{L^p} \,\leq\, \|F_0\|_{L^p} 
\end{equation}
and $F(t)\geq 0 $ for any nonnegative initial data $F_0$.
\\

Another remarkable property of our model is the preservation of the
energy structure. Let us define  the total energy $\mathcal{E}^0(t)$:
for a smooth solution $(F,\bE_F)$ to  (\ref{transport:F}),
$$
\mathcal{E}^0(t):= \int_{\RR^3\times\RR^+\times\RR} 
\left(\frac{w^2+v_\parallel^2}{2}\right) \,F(t)\,w\,dw\,dv_\parallel\, d\xx \,\,+\,\, \frac{1}{4\pi}\int_{\RR^3}
\left| \bE_F(t) \right|^2 d\xx 
$$

\begin{proposition}
\label{prop:3.2}
Let  $f_{\rm in}^\varepsilon$  be a  nonnegative function satisfying
(\ref{hyp:f1})-(\ref{hyp:f2}) and (\ref{hyp:f3}). Assume that the
limiting system  (\ref{transport:F}) has a smooth solution $(F,\bE_F)$
and $(P,\bE_P)$. Then, we have for any $t\geq 0$,
$$
\mathcal{E}^0(t)\,\,\leq\,\, \mathcal{E}^0(0).
$$
\end{proposition}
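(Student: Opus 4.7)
The plan is to differentiate $\mathcal{E}^0$ in time and show the derivative is nonpositive (in fact zero for smooth solutions) by exploiting the conservative form (\ref{conserv:F}), the Poisson equation, and, crucially, the constraint equation, namely the second line of (\ref{transport:F}). I would split the total energy as $\mathcal{E}^0 = K_\perp + K_\parallel + \mathcal{E}_E$ with $K_\perp := \tfrac12 \int w^3 F\,dw\,dv_\parallel\,d\xx$, $K_\parallel := \tfrac12 \int v_\parallel^2 w F\,dw\,dv_\parallel\,d\xx$, and $\mathcal{E}_E := \frac{1}{4\pi}\int |\bE_F|^2\, d\xx$, and treat each piece separately.

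For the two kinetic pieces, I would multiply (\ref{conserv:F}) by $w^2/2$ and by $v_\parallel^2/2$ respectively and integrate by parts transferring each flux onto the test function (under the decay/periodicity conditions used in Proposition \ref{prop:3.1}). Only the $w$-flux contributes to $\frac{d}{dt}K_\perp = \int w^2 u_w F\,dw\,dv_\parallel d\xx$, and only the two $v_\parallel$-fluxes survive in $\frac{d}{dt}K_\parallel = -\int v_\parallel(\partial_{x_\parallel}\phi_P)\,wF - \int v_\parallel w (\partial_{x_\parallel}\phi_F) P$.

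For the electric part, I would use $-\Delta\phi_F=\rho_F$ to rewrite $\mathcal{E}_E = \frac{1}{4\pi}\int\phi_F\rho_F\,d\xx$ and the symmetry $\int(\partial_t\phi_F)\rho_F = \int\phi_F\partial_t\rho_F = \int \nabla\phi_F\cdot\nabla\partial_t\phi_F$ to obtain $\frac{d}{dt}\mathcal{E}_E = \int \phi_F\,\partial_t F\,w\,dw\,dv_\parallel\,d\xx$. Substituting $\partial_t F$ from the first equation of (\ref{transport:F}) and integrating by parts term by term, the two contributions with $\partial_{v_\parallel}$ derivatives drop out because their coefficients (which are $\phi_F\partial_{x_\parallel}\phi_P$ and $\phi_F\partial_{x_\parallel}\phi_F$) are $v_\parallel$-independent. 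The $\bU_\perp$ and $u_w$ pieces combine, by means of the identity $w\,\nabla_{\xx_\perp}\cdot\bU_\perp=-\frac{w}{b^2}(\nabla^\perp b)\cdot\nabla_{\xx_\perp}\phi_F$ already recorded in the proof of Proposition \ref{prop:3.1}, to give exactly $-\frac{d}{dt}K_\perp$; and the $v_\parallel\partial_{x_\parallel}P$ term becomes $+\int v_\parallel w (\partial_{x_\parallel}\phi_F)\,P$ after one integration by parts in $x_\parallel$.

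Adding the three derivatives, $K_\perp$ cancels against the matching piece of $\frac{d}{dt}\mathcal{E}_E$ and the two $P$-terms involving $\partial_{x_\parallel}\phi_F$ also cancel, leaving $\frac{d}{dt}\mathcal{E}^0 = -\int v_\parallel(\partial_{x_\parallel}\phi_P)\,wF\,dw\,dv_\parallel\,d\xx$. This is where the constraint is invoked: one integration by parts in $x_\parallel$ turns it into $\int \phi_P\, w\, v_\parallel(\partial_{x_\parallel}F)$, and replacing $v_\parallel\partial_{x_\parallel}F$ by $(\partial_{x_\parallel}\phi_F)\partial_{v_\parallel}F$ via the constraint yields $\int \phi_P\, w(\partial_{x_\parallel}\phi_F)\,\partial_{v_\parallel}F$, which vanishes upon integrating by parts in $v_\parallel$ since the coefficient is $v_\parallel$-independent. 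Hence $\frac{d}{dt}\mathcal{E}^0 = 0$, giving the stated inequality (and in fact an equality) for smooth solutions. The main obstacle I expect is the bookkeeping across the numerous integrations by parts, and, more essentially, the recognition that the apparently orphaned final integral is precisely what the constraint equation, with $P$ in the role of associated Lagrange multiplier, is designed to annihilate.
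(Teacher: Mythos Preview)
Your proof is correct and follows essentially the same approach as the paper: multiply the conservative form by $v_\parallel^2/2$, by $w^2/2$, and by $\phi_F$, integrate by parts, and add. The only cosmetic difference is the order of operations—the paper invokes the constraint already in the $K_\parallel$ computation (showing $\int v_\parallel(\partial_{x_\parallel}\phi_P)\,wF=0$ there), whereas you postpone this cancellation until after summing all three pieces; the algebra is otherwise identical.
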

\begin{proof}
Let us first multiply (\ref{conserv:F}) by
$|v_\parallel|/2$ and integrate both in space and velocity, it gives
$$
\frac{d}{dt} \int \frac{|v_\parallel|^2}{2}\, F(t) \,w\,dw\,dv_\parallel
\,d\xx \,=\, -\int \frac{\partial \phi_P}{\partial x_\parallel} \,v_\parallel \,F(t) \,w\,dw\,dv_\parallel
\,d\xx  \,-\, \int \frac{\partial \phi_F}{\partial x_\parallel} \,v_\parallel \,P(t) \,w\,dw\,dv_\parallel
\,d\xx.  
$$
Using the constraint in the parallel direction
(\ref{transport:F}) on $F$, we obtain the following cancellation
\begin{eqnarray*}
\int \frac{\partial \phi_P}{\partial x_\parallel} \,v_\parallel \,F(t) \,w\,dw\,dv_\parallel\,d\xx
&=& -\int \phi_{P} \,v_\parallel \,\frac{\partial F}{\partial
  x_\parallel}(t) \,w\,dw\,dv_\parallel\,d\xx  
\\
&=& \int \frac{\partial}{\partial
  v_\parallel} \left( \phi_{P} \,\frac{\partial \phi_F}{\partial x_\parallel} \,F(t)\right) \,w\,dw\,dv_\parallel\,d\xx  \,=\, 0,
\end{eqnarray*}
hence it yields
\begin{equation}
\label{ener:1}
\frac{d}{dt} \int \frac{|v_\parallel|^2}{2}\, F(t) \,w\,dw\,dv_\parallel
\,d\xx \,=\, -\int \frac{\partial \phi_F}{\partial x_\parallel} \,v_\parallel \,P(t) \,w\,dw\,dv_\parallel
\,d\xx.  
\end{equation}
Then we multiply (\ref{conserv:F}) by
$w^2/2$ and integrate both in space and velocity, we have
\begin{equation}
\label{ener:2}
\frac{d}{dt} \int \frac{w^2}{2}\, F(t) \,w\,dw\,dv_\parallel
\,d\xx \,=\, \int  \nabla_{\xx_\perp}^\perp b\cdot \nabla_{\xx_\perp} \phi_F
  \,F\, \frac{w^3}{2b^2}\,dw\,dv_\parallel\,d\xx.  
\end{equation}
Finally, we multiply (\ref{conserv:F}) by the potential $\phi_F$
computed from $\rho_F$ and after integration, we have 
 \begin{eqnarray*}
\int \frac{\partial F}{\partial t} \phi_F
 \,w\,dw\,dv_\parallel\,d\xx &=& \int \nabla_{\xx_\perp}
 \phi_F \cdot \bU_\perp F \,w \,dw \,dv_\parallel\, d\xx  \,+\, 
 \int \frac{\partial \phi_F}{\partial x_\parallel} v_\parallel \,P(t)
 \,w\,dw \,dv_\parallel \,d\xx 
\\
&=& -\int \nabla_{\xx_\perp}
 \phi_F \cdot  \nabla_{\xx_\perp}^\perp b\,  F \frac{w^3}{2b^2} \,dw \,dv_\parallel\, d\xx  \,+\, 
 \int \frac{\partial \phi_F}{\partial x_\parallel} v_\parallel \,P(t)
 w\,dw \,dv_\parallel \,d\xx. 
\end{eqnarray*}
On the other hand, by definition of $\rho_F$ and using the Poisson
equation (\ref{poisson:i}), we have
\begin{eqnarray*}
\int \frac{\partial F}{\partial t} \phi_F
 \,w\,dw\,dv_\parallel\,d\xx &=& \frac{1}{2\pi}  \int  \frac{\partial
   \rho_F}{\partial t} \,\phi_F d\xx
\\
&=& \frac{1}{4\,\pi} \frac{d}{dt} \int \left|\nabla_\xx \phi_F\right|^2 d\xx.
\end{eqnarray*}
Hence, gathering the later results, we obtain
\begin{eqnarray}
\label{ener:3}
\frac{1}{4\,\pi} \frac{d}{dt} \int \left|\nabla_\xx \phi_F\right|^2
d\xx &=& -\int \nabla_{\xx_\perp}\phi_F \cdot
\nabla_{\xx_\perp}^\perp b\,  F \frac{w^3}{2b^2} \,dw \,dv_\parallel\,
d\xx  
\\
&+& 
\nonumber
 \int \frac{\partial \phi_F}{\partial x_\parallel}\, v_\parallel \,P(t)
 w\,dw \,dv_\parallel \,d\xx. 
\end{eqnarray}
Finally, adding (\ref{ener:1}), (\ref{ener:2}) and (\ref{ener:3}), we
get the conservation of energy 
$$
\frac{d}{dt} \mathcal{E}^0(t) = 0. 
$$
\end{proof}

\subsection{Invariance of the magnetic moment (first adiabatic invariant)}
In this section we assume that the external magnetic field does not
depend on time. Then we define $\mu$ as the magnetic moment 
$$
\mu = \frac{w^2}{2\,b(\xx_\perp)} 
$$
and let us show that it is an invariant of the movement for the
asymptotic model (\ref{transport:F}).

We compute the time derivative along the flow,
$$
\frac{d\mu}{dt} \,=\, \frac{w}{b} \,\frac{dw}{dt}  \,-\, \frac{w^2}{2b^2}
\,\nabla_{\xx_\perp}b\cdot \frac{d\xx_\perp}{dt}.
$$
Using the characteristic curves to (\ref{transport:F}) and the
orthogonality properties of the $\,^\perp$ operator, it yields
\begin{eqnarray*}
\frac{d\mu}{dt} &=& \frac{w^2}{2b^3}\,\nabla_{\xx_\perp}^\perp b\cdot \nabla_{\xx_\perp}\phi_F   \,+\, \frac{w^2}{2b^3}
\,\nabla_{\xx_\perp}b\cdot  \left( \nabla_{\xx_\perp}\phi_F +\frac{w^2}{2}
  \frac{\nabla_{\xx_\perp} b}{b}  \right)^\perp,
\\
&=& \frac{w^2}{2b^3}\,\nabla_{\xx_\perp}^\perp b\cdot \nabla_{\xx_\perp}\phi_F  \,+\, \frac{w^2}{2b^3}
\,\nabla_{\xx_\perp}b\cdot \nabla_{\xx_\perp}^\perp\phi_F,
\\
&=& 0.
\end{eqnarray*}
Therefore, we can perform a change of variable on (\ref{transport:F})
to get the time evolution of the distribution function expressed in
term of the magnetic moment $F\equiv F(t,\xx,\mu,v_\parallel)$, it
yields the following equation 
\begin{equation}
\label{transport:Fmu}
\left\{
\begin{array}{l}
\ds\frac{\partial F}{\partial t}\,+ \,
\bU_\perp\cdot\nabla_{\xx_\perp}F\,-\,  \frac{\partial \phi_P}{\partial x_\parallel} \frac{\partial F}{\partial v_\parallel} 
\,+\,  v_\parallel\frac{\partial P}{\partial x_\parallel}\,-\,
\frac{\partial \phi_F}{\partial x_\parallel}\frac{\partial P}{\partial v_\parallel}\,=\,0,
\\
\, 
\\
\ds v_\parallel \frac{\partial F}{\partial x_\parallel} \,-\,
\frac{\partial \phi_F}{\partial x_\parallel} \frac{\partial F}{\partial v_\parallel} = 0,
\end{array}\right.
\end{equation}
where $\bU_\perp$ is now given by 
$$
\bU_\perp \,=\, -\frac{1}{b} \left( \nabla_{\xx_\perp}\phi_F \,+\, \mu  \nabla_{\xx_\perp} b  \right)^\perp, 
$$
 and the electric fields
$\bE_F$, $\bE_P$ in (\ref{poisson:i}). Notice that since $b(\xx_\perp)\,d\mu
\,d\xx_\perp= w\,dw \,d\xx_\perp$, the limiting system (\ref{transport:Fmu}) can be
written in conservative form when $b$ does not vary with time
 
\begin{equation}
\label{conserv:Fmu}
\left\{
\begin{array}{l}
\ds\frac{\partial bF}{\partial t}\,+ \,
\nabla_{\xx_\perp}\cdot \left(\bU_\perp\,bF\right)\,-\,\frac{\partial}{\partial v_\parallel} \left(\frac{\partial \phi_P}{\partial x_\parallel} \,bF\right)
\,+\,  \frac{\partial }{\partial x_\parallel}\left(v_\parallel\,bP\right)\,-\,
\frac{\partial}{\partial v_\parallel}\left( \frac{\partial \phi_F}{\partial x_\parallel}\,bP\right)\,=\,0,
\\
\, 
\\
\ds\frac{\partial}{\partial x_\parallel}\left(  v_\parallel \,bF\right) \,-\,
\frac{\partial
  }{\partial v_\parallel}\left( \frac{\partial \phi_F}{\partial x_\parallel} bF\right) \,=\, 0.
\end{array}\right.
\end{equation}

\subsection{Longitudinal invariant (second adiabatic
  invariant)}

From the constraint in (\ref{transport:F}), we easily deduce after
integration over $v_\parallel\in \RR$ that
$$
\frac{\partial }{\partial x_\parallel} \int_{\RR} v_\parallel F
dv_\parallel = \frac{\partial \phi_F}{\partial x_\parallel} \int_{\RR}
\frac{\partial F} {\partial v_\parallel}  dv_\parallel = 0,
$$ 
which gives the conservation of momentum along the magnetic field
line.


\section{The asymptotic limit $\varepsilon \to 0$}
\label{sec:3} 

It is worth to mention here that these {\it a priori} estimates on the
distribution function $F$ and the electric field $\bE_F$ would give enough 
compactness to treat the nonlinear term $\bU_\perp b F$, but we do not
get any estimate on the additional term $(P,\bE_P)$ so that the
existence of weak solution to the limiting system (\ref{transport:F})
is still an open problem.

In order to establish a convergence result, we apply a formal analysis
of the system (\ref{V_varepsilon_f}). Applying a standard  Hilbert
expansion to $(f^\varepsilon, \bE^\varepsilon)$ solution to the
Vlasov-Poisson system (\ref{V_varepsilon_f}), we get a
hierarchy of differential equations which have to be solved at each order. Here, we take advantage of the simple structure of the magnetic
field to solve explicitly each problem and get the asymptotic model
(\ref{transport:F}).
 

\subsection{The Hilbert expansion}
\label{ssec_Hilbert}
Since the leading order term in (\ref{V_varepsilon_f})  involves the effect of a circular motion around the magnetic field
lines, we now specifically examine the properties of this operator. Let us denote by $\mathcal{L}$ the following operator
$$
\mathcal{L} f \,\,=\,\,  -b(t,\xx_\perp) \,\,\vv^\perp \cdot\nabla_\vv f.
$$ 
We have the following result
\begin{lemma}
\label{lem:01} 
Assume that $b$ satisfies (\ref{hyp:B0})-(\ref{hyp:B1}). Then, the null space $\ker \mathcal{L}$ of $\mathcal{L}$ consists of functions which only depend
on the parallel component $v_\parallel$ and on the amplitude of
$\vv_\perp$, that is, $w = \|\vv_\perp\|$, 
\begin{equation}
\mathcal{L} f \, =\,  0 \,\,  \Longleftrightarrow \, \, f(\vv) \,
\equiv \,   \tilde f (w,v_\parallel) \, \mbox{ with } \, (w,v_\parallel) \in \RR^+\times\RR. 
\label{null_space_L}
\end{equation}
\end{lemma}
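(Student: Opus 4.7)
The plan is to diagonalise $\mathcal{L}$ by introducing the polar coordinates (\ref{c:polar}) in the perpendicular velocity plane, with $v_\parallel$ kept as a third coordinate. In these coordinates the differential operator $\vv^\perp\cdot\nabla_\vv$ should reduce to a pure $\theta$-derivative, after which the kernel characterisation becomes immediate because of the non-degeneracy hypothesis $b\geq\alpha>0$.

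More concretely, I would first record the chain-rule identities
\begin{equation*}
\frac{\partial v_x}{\partial\theta}=-w\sin\theta=-v_y,\qquad \frac{\partial v_y}{\partial\theta}=w\cos\theta=v_x,\qquad \frac{\partial w}{\partial v_x}=\cos\theta,\qquad \frac{\partial w}{\partial v_y}=\sin\theta,
\end{equation*}
and use them to translate derivatives in $(v_x,v_y)$ into derivatives in $(w,\theta)$. A direct computation then shows that, for any smooth function $f$ of $\vv$ expressed as $\tilde f(w,\theta,v_\parallel)$,
\begin{equation*}
v_y\,\partial_{v_x}f\,-\,v_x\,\partial_{v_y}f\,=\,-\,\partial_\theta \tilde f.
\end{equation*}
Since the operator $\vv^\perp\cdot\nabla_\vv$ acts only on the $(v_x,v_y)$ components (the rotation is about the $0z$ axis), this identity gives
\begin{equation*}
\mathcal{L}f\,=\,-b(t,\xx_\perp)\,\vv^\perp\cdot\nabla_\vv f\,=\,b(t,\xx_\perp)\,\partial_\theta \tilde f.
\end{equation*}

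To conclude, I would invoke the assumption (\ref{hyp:B1}), which guarantees $b(t,\xx_\perp)\geq \alpha>0$. Thus $\mathcal{L}f=0$ is equivalent to $\partial_\theta \tilde f\equiv 0$, i.e.\ $\tilde f$ does not depend on the gyrophase $\theta$ and is therefore a function of $(w,v_\parallel)\in\RR^+\times\RR$ alone. The converse inclusion is immediate: any function of the form $\tilde f(w,v_\parallel)$ is annihilated by $\partial_\theta$, hence by $\mathcal{L}$. There is no genuine obstacle here; the only technical point is the chain-rule identification of $v_y\,\partial_{v_x}-v_x\,\partial_{v_y}$ with $-\partial_\theta$, which reflects the classical fact that this operator is the infinitesimal generator of rotations in the $(v_x,v_y)$ plane.
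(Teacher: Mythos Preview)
Your proposal is correct and follows essentially the same approach as the paper: reduce $\mathcal{L}$ to the perpendicular velocity components, pass to polar coordinates to obtain $\mathcal{L}f = b(t,\xx_\perp)\,\partial_\theta \tilde f$, and then invoke the non-degeneracy hypothesis $b\geq\alpha>0$ to conclude. The only difference is that you spell out the chain-rule identity $v_y\,\partial_{v_x}-v_x\,\partial_{v_y}=-\partial_\theta$ explicitly, whereas the paper simply asserts the result of the change of variables.
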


\begin{proof}
On the one hand, we notice that the magnetic field $\bB_{\rm ext}$
only acts on $\vv_\perp=(v_x,v_y)$, which means that
$$
\mathcal{L} f \, \, =\, \,  -b(t,\xx_\perp) \, \, \vv^\perp_\perp \, \cdot \, \nabla_{\vv_\perp} f,
$$ 
where now $\,^\perp$ acts on a vector of $\RR^2$ and
$\vv_\perp^\perp=(v_y,-v_x)$.  Then,  applying a change of variable
to polar coordinates on $\vv_\perp\in\RR^2$, it yields
$$
\mathcal{L} f  \, \,=\, \, b(t,\xx_\perp) \, \,\frac{\partial f}{\partial \theta}.
$$
From (\ref{hyp:B1}), the magnetic field does not vanish, hence we get
$\mathcal{L} f =0$ if and only if $f(\vv) \equiv \tilde f (w,v_\parallel)$, which proves (\ref{null_space_L}). 
\end{proof}

Now, our goal is to find the asymptotic limit  $\varepsilon \to 0$ to
the Vlasov-Poisson system  \eqref{V_varepsilon_f}.  We start by assuming that $(f^\varepsilon, \bE^\varepsilon)$ admits an Hilbert expansion: 
$$
f^\varepsilon \,\,=\,\,  f_0+ \varepsilon \, f_1 + \varepsilon^2 \,
f_2 + \ldots,
$$
and
$$ 
\bE^\varepsilon \,\,=\,\, \bE_0 + \varepsilon\, \bE_1 + \varepsilon^2\, \bE_2 + \ldots
$$
Inserting these expansions in the Vlasov-Poisson system \eqref{V_varepsilon_f}, we find
for the leading order $\varepsilon^{-2}$, $\varepsilon^{-1}$ and $\varepsilon^{0}$ that 
\begin{eqnarray}
\label{order-2}
\mathcal{L} f_0 &=& 0. 
\\
\label{order-1}
\mathcal{L} f_1 &=& \vv\cdot \nabla_\xx f_0
\,+\,  \bE_0\cdot \nabla_\xx f_0.
\\
\label{order-0}
\mathcal{L} f_2 &=& \frac{\partial
  f_0}{\partial t} \,+\,\vv\cdot \nabla_\xx f_1
\,+\,  \bE_0\cdot \nabla_\vv f_1 \,+\,  \bE_1\cdot \nabla_\vv f_0.
\end{eqnarray}  

In order to solve eq. $\mathcal{L} f  = h$, we proceed in two steps : 
\begin{itemize}
\item we verify the solvability condition  $\Pi\, h=0$;
\item we compute $f$ by integrating $h$ over $\theta$.
\end{itemize}

\subsection{Proof of Theorem \ref{th:01}}

In this section we derive an asymptotic model for the limit $f_0$ of $f^\varepsilon$ by formally passing to the limit
$\varepsilon\to0$ in the Vlasov-Poisson system \eqref{V_varepsilon_f}.  This model will be
deduced by solving the sequence of equations appearing in the Hilbert
expansion \eqref{order-2}-\eqref{order-0}. 

First, by a simple application of Lemma \ref{lem:01},  the leading
order of the Hilbert expansion (\ref{order-2}) can be directly
solved. The function $f_0$ does not depend on $\theta\in[0,2\pi]$ and $f_0\equiv F(t,\xx,w, v_\parallel)$  for any
$\vv=(\vv_\perp, v_\parallel)\in\RR^3$ and at time $t=0$, we set
$F(0) =  \Pi f_{{\rm in}, 0}$, where $f_{{\rm in}, 0}$ is given from
the expansion of the initial data
$f^\varepsilon_{\rm in}$ in (\ref{hyp:f3}). 

Moreover, substituting the Hilbert expansion to $\bE^\eps$ in the Poisson
equation in (\ref{V_varepsilon_f}), gives that  $\bE_0=\bE_F:=-\nabla \phi_F$ with 
$$
\Delta \phi_F \,=\, \rho_F= 2\pi \int_{\RR^+\times\RR} F \,w\,dwdv_\parallel.
$$

Now, the goal is to find the equation satisfied by $F$, hence we turn 
to~(\ref{order-1}) and first  set 
\begin{equation}
\label{def:G}
\bG(t,\xx, w, v_\parallel) \,\,:=\,\, w \,\nabla_{\xx_\perp}F\, -\, \nabla_{\xx_\perp}\phi_F
\,\frac{\partial F}{\partial w}.
\end{equation}
Then we  prove the following Proposition.

\begin{proposition}
\label{prop:4.3}
Assume that $b$ satisfies (\ref{hyp:B0})-(\ref{hyp:B1}) and consider
$(F,\bE_F)$ the leading order of the Hilbert expansion (\ref{exp:01}).  Then equation \eqref{order-1} admits a solution $f_1$ if and only if $F$
satisfies the solvability condition 
$$
 v_\parallel \frac{\partial F}{\partial x_\parallel} \,-\,
\frac{\partial \phi_F}{\partial x_\parallel}  \frac{\partial F}{\partial v_\parallel} = 0.
$$  
Moreover, if this condition is satisfied, then there exists a function $P\in
\ker\mathcal{L}$  such that
$$
\left\{
\begin{array}{lll}
\ds f_1(t,\xx,\vv) &=& -\frac{1}{b(t,\xx_\perp)}\, {\bf e}_\theta\cdot
  \bG(t,\xx,r,v_\parallel)  \,\,+\,\, P(t,\xx,r,v_\parallel),
\\
\,
\\
\bE_1(t,\xx) &=& \bE_P:=-\nabla \phi_P,
\end{array}\right. 
$$
with ${\bf e}_\theta=(-\sin\theta,\cos\theta)^t$, $\phi_P$ satisfies
the Poisson equation (\ref{poisson:i}) and at time $t=0$, we
have $P(0) = \Pi f_1(0,\xx,\vv)$. 
\end{proposition}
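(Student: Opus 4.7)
The plan is to view equation (\ref{order-1}) as an inversion problem for the gyrophase operator $\mathcal{L}$. In the polar velocity coordinates (\ref{c:polar}), the calculation in the proof of Lemma \ref{lem:01} shows $\mathcal{L} = b\,\partial_\theta$, so its kernel consists precisely of functions independent of $\theta$. Consequently, an equation $\mathcal{L} g = h$ is solvable if and only if $\Pi h = 0$, and when it is, the solution is unique up to an element of $\ker\mathcal{L}$. That free element will be renamed $P$.

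The first step is to rewrite the right-hand side $h := \vv\cdot\nabla_\xx f_0 + \bE_0\cdot\nabla_\vv f_0$ in polar coordinates. Since $f_0 = F(t,\xx,w,v_\parallel)$, the chain rule gives $\nabla_{\vv_\perp} F = \ee_w\,\partial_w F$, and decomposing $\vv = w\,\ee_w + v_\parallel\,\ee_z$ leads to
$$
h \;=\; \ee_w\cdot\bG(t,\xx,w,v_\parallel) \;+\; v_\parallel\,\frac{\partial F}{\partial x_\parallel} \;-\; \frac{\partial \phi_F}{\partial x_\parallel}\,\frac{\partial F}{\partial v_\parallel},
$$
with $\bG$ defined in (\ref{def:G}). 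Taking $\Pi$ of this identity and using $\Pi(\cos\theta)=\Pi(\sin\theta)=0$ kills the first term, while the remaining terms are already $\theta$-independent and survive the average. Thus $\Pi h = 0$ is equivalent to the parallel constraint announced in the Proposition, which proves the solvability part.

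Assuming this constraint holds, I would invert $\mathcal{L}$ by a direct integration in $\theta$. A primitive of $\ee_w = (\cos\theta,\sin\theta)$ is $(\sin\theta,-\cos\theta) = \ee_w^\perp = -\ee_\theta$, so
$$
f_1(t,\xx,\vv) \;=\; -\frac{1}{b(t,\xx_\perp)}\,\ee_\theta\cdot\bG(t,\xx,w,v_\parallel) \;+\; P(t,\xx,w,v_\parallel),
$$
for some $P\in\ker\mathcal{L}$, i.e.\ a function of $(t,\xx,w,v_\parallel)$ only. The initial value $P(0) = \Pi f_1(0,\cdot)$ is then forced by applying $\Pi$ to $f_1(0)$, since the first term has zero gyroaverage.

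It remains to identify $\bE_1$. Substituting the Hilbert expansion into the Poisson equation in (\ref{V_varepsilon_f}) and equating the order-$\eps$ terms gives $-\Delta\phi_1 = \int f_1\,d\vv$ with $\bE_1 = -\nabla\phi_1$; integrating $f_1$ over $\vv$ in polar coordinates, the contribution of $-\frac{1}{b}\ee_\theta\cdot\bG$ vanishes because $\int_0^{2\pi}\ee_\theta\,d\theta = 0$, so only $P$ contributes and yields exactly $\rho_P = 2\pi\int P\,w\,dw\,dv_\parallel$. I expect the main technical care to be simply in the polar change of variables — correctly writing $\nabla_{\vv_\perp}$ on a function of $w$ and tracking the sign conventions relating $\ee_w^\perp$ to $\ee_\theta$ — since everything else follows from the algebraic structure $\mathcal{L} = b\,\partial_\theta$.
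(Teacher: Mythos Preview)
Your proposal is correct and follows essentially the same approach as the paper: rewrite the right-hand side of \eqref{order-1} as $\ee_w\cdot\bG$ plus the $\theta$-independent parallel terms, use $\Pi\ee_w=0$ to obtain the solvability constraint, integrate in $\theta$ to produce the $-\frac{1}{b}\ee_\theta\cdot\bG$ term with a free element $P\in\ker\mathcal{L}$, and then read off $\bE_1=\bE_P$ from the order-$\varepsilon$ Poisson equation using $\Pi\ee_\theta=0$. The only cosmetic difference is that the paper states $P(0)=\Pi f_{{\rm in},1}$ directly from the expansion \eqref{hyp:f3}, which is the same as your $P(0)=\Pi f_1(0,\cdot)$.
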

\begin{proof}
Thanks to the definition of $\bG$, we  write (\ref{order-1})  as
$$
\mathcal{L} f_1 \, =\,  {\bf e}_w\cdot  \bG(t,\xx,w,v_\parallel) \,+\,
v_\parallel \frac{\partial F}{\partial x_\parallel} \,-\,
\frac{\partial \phi_F}{\partial x_\parallel} \frac{\partial F}{\partial v_\parallel}.
$$

On the one hand, we require that the solvability condition of
(\ref{order-1})  is well satisfied 
$$
\Pi \,\mathcal{L} f_1 \,\,=\,\, 0.
$$
Since $\Pi {\bf e}_w=0$, a necessary and sufficient condition for the solvability of
(\ref{order-1}) is that $F$ satisfies the following condition
\begin{equation*}
v_\parallel \frac{\partial F}{\partial x_\parallel} \,-\,
\frac{\partial \phi_F}{\partial x_\parallel}  \frac{\partial F}{\partial v_\parallel} = 0,
\end{equation*}
which corresponds to the constraint equation in (\ref{transport:F}). 

On the other hand, assuming that this solvability condition is
verified, we can  explicitly solve (\ref{order-1})   by integration with
respect to $\theta\in [0,2\pi]$. Then there exists a function
$P\in \ker\mathcal{L}$ such that for any $(t,\xx,\vv)\in\RR^+\times\RR^3\times\RR^3$, 
$$
f_1(t,\xx,\vv) = -\frac{1}{b(t,\xx_\perp)}\, {\bf e}_\theta\cdot
  \bG(t,\xx,w,v_\parallel)    \,\,+\,\, P(t,\xx,w,v_\parallel),
$$ 
where $\Pi \ee_\theta=0$  and from the initial condition
(\ref{hyp:f3}), it  gives that
$$
P(0) = \Pi f_{{\rm in}, 1}.
$$
Finally, substituting the Hilbert expansion to $\bE^\eps$ in the Poisson
equation in (\ref{V_varepsilon_f}) and using that 
$$
\Pi f_1 = \frac{1}{b}\Pi \ee_\theta \cdot \bG \,+\,  \Pi  P = \, \Pi  P, 
$$
we observe that 
$$
\rho_1 = \int_{\RR^3} f_1 d\vv =  2\pi \int_{\RR^+\times\RR} P \,w\,dwdv_\parallel,
$$
which gives that  $\bE_1=\bE_P:=-\nabla \phi_P$ with $\phi_P$ solution
to the Poisson equation
$$
-\Delta \phi_P \,=\, \rho_F= 2\pi \int_{\RR^+\times\RR} P \,w\,dwdv_\parallel.
$$

\end{proof}

Note that $f_1$ depends now on the whole variable
$(\xx,\vv)\in\RR^3\times\RR^3$. Finally, the equation satisfied by $F$ now appears as the solvability condition of (\ref{order-0}).

\begin{proposition}
\label{prop:4.4}
Assume that $b$ satisfies (\ref{hyp:B0})-(\ref{hyp:B1}) and consider
$(F,\bE_F)$ the leading order of the Hilbert expansion (\ref{exp:01}). Then, equation \eqref{order-0} admits a solution $f_2$ if and only if $F$
satisfies the first equation in (\ref{transport:F}), that is,
$$
\frac{\partial F}{\partial t}\,+ \, \bU_\perp\cdot\nabla_{\xx_\perp}F
\,+\,u_w  \frac{\partial
  F}{\partial w} \,-\,  \frac{\partial \phi_P}{\partial x_\parallel} \frac{\partial F}{\partial
  v_\parallel} 
\,+\,  v_\parallel\frac{\partial P}{\partial x_\parallel}\,-\,  \frac{\partial \phi_F}{\partial x_\parallel} \,\frac{\partial P}{\partial v_\parallel}\,=\,0,
$$
where the drift velocity $\bU_\perp$ and $u_w$
are given by (\ref{UperpW}).
\end{proposition}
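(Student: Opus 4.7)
The strategy parallels that of Proposition~\ref{prop:4.3}. In the polar chart on $\vv_\perp$ the operator is $\mathcal{L}=b\,\partial_\theta$, so $\mathrm{Range}(\mathcal L)=\ker\Pi$, and hence \eqref{order-0} admits a solution $f_2$ if and only if the gyroaverage $\Pi$ of its right-hand side vanishes; whenever it does, $f_2$ is recovered by explicit $\theta$-integration, modulo an element of $\ker\mathcal L$ which plays no role at the present order. The entire proof thus reduces to substituting
$$
f_0=F,\qquad \bE_0=-\nabla\phi_F,\qquad \bE_1=-\nabla\phi_P,\qquad f_1=-\frac{1}{b}\,\ee_\theta\!\cdot\!\bG\,+\,P
$$
(from Proposition~\ref{prop:4.3}) into the right-hand side of \eqref{order-0} and computing the four gyroaverages.

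I would handle the four terms in order. First, $\Pi(\partial_tf_0)=\partial_tF$ is immediate. Second, for $\Pi(\vv\cdot\nabla_\xx f_1)$, I split $\vv=w\,\ee_w+v_\parallel\,\ee_\parallel$: the parallel component yields $v_\parallel\,\partial_{x_\parallel}P$ since $\Pi\ee_\theta=0$, while $\Pi(w\,\ee_w\cdot\nabla_{\xx_\perp}f_1)$ decomposes into a piece from differentiating the prefactor $1/b$ and a piece from differentiating $\bG$, each evaluated via the elementary identities $\Pi(\cos^2\theta)=\Pi(\sin^2\theta)=\tfrac12$ and $\Pi(\cos\theta\sin\theta)=0$. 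Third, for $\Pi(\bE_F\cdot\nabla_\vv f_1)$, I would expand $\nabla_\vv$ in the polar basis as $\ee_w\partial_w+\tfrac{1}{w}\ee_\theta\,\partial_\theta+\ee_\parallel\,\partial_{v_\parallel}$; the $\ee_\theta\,\partial_\theta$ branch, via $\partial_\theta\ee_\theta=-\ee_w$, produces additional $\theta$-averaged terms involving $\nabla_{\xx_\perp}\phi_F$, while the parallel branch gives $-\partial_{x_\parallel}\phi_F\,\partial_{v_\parallel}P$. Fourth, $\Pi(\bE_P\cdot\nabla_\vv F)$ collapses to $-\partial_{x_\parallel}\phi_P\,\partial_{v_\parallel}F$ because $F$ depends on $\vv$ only through $(w,v_\parallel)$ and $\Pi\ee_w=0$ kills the perpendicular part.

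Summing the four contributions and matching the resulting coefficients against the definitions \eqref{UperpW} gives the first equation of \eqref{transport:F}; the converse (sufficiency of the solvability condition) is then automatic. The main obstacle is precisely this final matching: the terms arising from differentiating $1/b$ in $f_1$, from $\nabla_{\xx_\perp}\bG$, and from the $\ee_w\partial_w$ and $\ee_\theta\partial_\theta$ branches of $\bE_F\cdot\nabla_\vv f_1$ each contribute several pieces, and one has to check that they recombine without spurious residuals into exactly the $\bE\!\times\!\bB$ drift $-\tfrac{1}{b}(\nabla_{\xx_\perp}\phi_F)^\perp$ and the gradient-$B$ drift $-\tfrac{w^2}{2b^2}(\nabla_{\xx_\perp}b)^\perp$ making up $\bU_\perp$, together with the single scalar radial speed $u_w=\tfrac{w}{2b^2}\nabla_{\xx_\perp}^\perp b\cdot\nabla_{\xx_\perp}\phi_F$. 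Everything else is routine bookkeeping in the polar chart, using that $b$ and $\phi_F$ are independent of $\theta$ and of $(w,v_\parallel)$.
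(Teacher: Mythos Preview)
Your proposal is correct and follows essentially the same route as the paper: both reduce the problem to the solvability condition $\Pi(\text{RHS of \eqref{order-0}})=0$, substitute the expressions for $f_0,f_1,\bE_0,\bE_1$ from Proposition~\ref{prop:4.3}, and compute the four gyroaverages in the polar chart using $\Pi\ee_w=\Pi\ee_\theta=0$ and $\Pi(\ee_w\otimes\ee_\theta)=\tfrac12 J$ (the rotation by $-\pi/2$). The only cosmetic difference is that the paper writes the transport term in divergence form $\nabla_\xx\!\cdot(\vv f_1)$ before averaging, which slightly streamlines the identification of the $\nabla_{\xx_\perp}b$ contribution and the cancellation between \eqref{rat:1} and \eqref{rat:2}; your direct expansion via $\ee_w\partial_w+\tfrac1w\ee_\theta\partial_\theta+\ee_\parallel\partial_{v_\parallel}$ leads to the same result.
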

\begin{proof}
As before we apply the solvability condition to  (\ref{order-0}),
which  corresponds to
$$
\Pi \,\mathcal{L} f_2 \,\,=\,\, 0,
$$
or it can be written as
\begin{equation}
\label{rat}
\int_0^{2\pi} \left(\frac{\partial F}{\partial t} \,+\,\vv\cdot \nabla_\xx f_1
\,+\,  \bE_F\cdot \nabla_\vv f_1 \,+\,  \bE_P\cdot \nabla_\vv F\right)
d\theta = 0,
\end{equation}
where $\bE_Q=-\nabla \phi_Q$ corresponds to electric field obtained by solving the
Poisson equation (\ref{poisson:i}). 

Let us compute explicitly each term  with respect to
$(F,\bE_F)$ and $(P,\bE_P)$ given from the previous analysis.
 
On the one hand, since the distribution function $F$ does not depend
on the angular velocity, we have that
$$
\frac{1}{2\pi}\int_0^{2\pi} \frac{\partial F}{\partial t} d\theta = \frac{\partial F}{\partial t}
$$
and 
$$
\frac{1}{2\pi}\int_0^{2\pi} \bE_P\cdot \nabla_\vv F \, d\theta =
- \frac{\partial \phi_P}{\partial x_\parallel} \frac{\partial F}{\partial v_\parallel}.
$$ 
On the other hand, from  the definition of $f_1$ given in Proposition \ref{prop:4.3}, we get that
\begin{equation*}
\frac{1}{2\pi}\,\nabla_\xx\cdot\left( \int_0^{2\pi} v f_1 d\theta\right)  \,=\,
-\frac{w}{2} \,\nabla_{\xx_\perp} \cdot\left( \frac{\bG^\perp}{b}\right)
\,+\,  v_\parallel\frac{\partial P}{\partial x_\parallel},
\end{equation*}
with 
$$
\bG^\perp =  w \,\nabla_{\xx_\perp}^\perp F\, -\, \nabla_{\xx_\perp}^\perp\phi_F
\,\frac{\partial F}{\partial w}.
$$
Then, since the operator $\nabla_{\xx_\perp}\cdot\nabla_{\xx_\perp}^\perp=0$, it
yields that
\begin{equation}
\label{rat:1}
\frac{1}{2\pi}\,\nabla_\xx\cdot\left( \int_0^{2\pi} v f_1 d\theta\right) \,=\, \frac{w}{2b}\,\nabla_{\xx_\perp}^\perp \phi_F\,\frac{\partial }{\partial
  w}\left(\nabla_{\xx_\perp} F\right)
\,+\,\frac{w}{2} \frac{\nabla_{\xx_\perp} b}{b^2}
 \,\cdot \bG^\perp \,+\, v_\parallel\frac{\partial P}{\partial x_\parallel}.
\end{equation}
Finally, we evaluate the penultimate term in (\ref{rat}). From the expression of $f_1$ in Proposition
\ref{prop:4.3},  we  obtain
\begin{equation*}
\frac{1}{2\pi}\,\bE_F\cdot  \left(\int_0^{2\pi} \nabla_\vv f_1 d\theta\right) \,=\, \frac{1}{2b}\,\nabla_{\xx_\perp}\phi_F  \cdot \left(
  \frac{\bG}{w} + \frac{\partial \bG}{\partial w}\right)^\perp \,-\,  \frac{\partial \phi_F}{\partial x_\parallel} \,\frac{\partial P}{\partial v_\parallel}.
\end{equation*}
Hence using the orthogonality property and  $\bu^\perp\cdot
\bw=-\bw^\perp\cdot \bu$ for any $(\bu,\bw)\in\RR^2\times\RR^2$, it yields
\begin{equation}
\label{rat:2}
\frac{1}{2\pi}\,\bE_F\cdot  \left(\int_0^{2\pi} \nabla_\vv f_1 d\theta\right)= -\frac{\nabla_{\xx_\perp}^\perp\phi_F}{b} \cdot \nabla_{\xx_\perp} F \,+\,
\frac{w}{2b}\,\nabla_{\xx_\perp} \phi_F\cdot\frac{\partial }{\partial
  w}\left(\nabla_{\xx_\perp}^\perp F\right) \,-\,  \frac{\partial \phi_F}{\partial x_\parallel} \,\frac{\partial P}{\partial v_\parallel}.
\end{equation}
Gathering (\ref{rat:1}) and (\ref{rat:2}), we  get some cancellation
and it yields to the following expression
\begin{eqnarray*}
\frac{1}{2\pi}\, \int_0^{2\pi}
  \left(\nabla_\xx\cdot v f_1\,+\, \bE_F\cdot \nabla_\vv f_1 \right) \,d\theta &=&
  \bU_\perp\cdot\nabla_{\xx_\perp} F \,+\, u_w\, \frac{\partial F}{\partial w}
\\ 
&+&  v_\parallel\frac{\partial P}{\partial x_\parallel} \,-\,  \frac{\partial \phi_F}{\partial x_\parallel} \,\frac{\partial P}{\partial v_\parallel}.
\end{eqnarray*}
where 
$$
\bU_\perp \,\,=\,\, -\frac{1}{b}\left(\nabla_{\xx_\perp}\phi \,+\,
  \frac{w^2}{2b}\nabla_{\xx_\perp} b\right)^\perp, \quad u_w\,=\, \frac{w}{2b^2}\,\nabla_{\xx_\perp}^\perp b\cdot\nabla_{x_\perp}\phi_F.
$$
Finally, the solvability condition on $f_2$ is satisfied once the
distribution function  $F$ is solution to the following equation
$$
\frac{\partial F}{\partial t} \,+ \, \bU_\perp\cdot\nabla_{\xx_\perp}F
\,+\, u_w \frac{\partial
  F}{\partial w} \,-\,  \frac{\partial \phi_P}{\partial x_\parallel} \, \frac{\partial F}{\partial
  v_\parallel} 
\,+\,  v_\parallel\frac{\partial P}{\partial x_\parallel}\,-\,  \frac{\partial \phi_F}{\partial x_\parallel} \,\frac{\partial P}{\partial v_\parallel}\,=\,0,
$$
which completes the first part of the proof.

When the solvability condition is satisfied, then the  equation
(\ref{order-0}), can be solved explicitly and the solution $f_2$ only
depends on $(F,\bE_F)$ and $(P,\bE_P)$ and a function $R\in \ker \mathcal{L}$.
\end{proof}
 
\section{Open problems and conclusion}
\label{sec:5}
In this paper we studied the long time behavior of the solution to the
Vlasov-Poisson system (\ref{V_varepsilon_f}) with a strong external
magnetic field $\bB_{\rm ext}=(0,0,b)$. We provide a formal analysis
based on a Hilbert type expansion of the solution and the formal limit
is solution to a reduced kinetic model (\ref{transport:F}) where the
solution does not depend anymore on the angular perpendicular velocity
$\theta\in (0,2\pi)$. As fas as we know, this reduced model is new and satisfies some fundamental
properties as the correct drift velocities $\bE\times \bB_{\rm ext}$,
gradient B-drift, conservation of energy, entropy, and invariance of
the magnetic moment. Thus, the reduced model  (\ref{transport:F})
seems to be completely relevant  and certainly
deserves more attention. In particular  several important questions remain open.

\subsubsection*{\bf About the generalization to an arbitrary external
  magnetic field.}  Here we focus on the
formal analysis when the magnetic field only applies in the
$z$-direction. We may also consider an arbitrary external magnetic field
  $\bB_{\rm ext}$ in order to get a more elaborated limiting system
  taking into account curvature drift, polarization effects, etc. This
  can be done by following the guideline of the analysis performed in \cite{DHV}.
  
\subsubsection*{\bf About the rigorous justification of the limiting
  system.} 
To justify our asymptotic analysis,  we should consider a smooth
  solution to the Vlasov-Poisson system  (\ref{V_varepsilon_f}), and
 should  assume  that the limiting system also admits  a smooth solution : for any $k\geq 0$,
  $(F,\bE_F) \in {\mathcal C}^{k+3}_c\times  {\mathcal C}^{k+3}$ and
  $(P,\bE_P) \in  {\mathcal C}^{k+2}_c\times  {\mathcal
    C}^{k+2}$, where  ${\mathcal C}^{k}$ is the space of
functions with $k$ continuous derivatives and ${\mathcal C}^{k}_c$ the
sub-space of  ${\mathcal C}^{k}$ with compactly supported functions. 

Then,  we construct  $(F^\varepsilon, \bE^\varepsilon)$  by 
$$
F^\varepsilon \,=\, F  \,+\, \varepsilon\,  f_1 \, +\, \varepsilon^2\, f_2, \quad   \bE^\varepsilon \,=\, \bE_F  + \varepsilon  \,\bE_P,
$$
where $f_1$ and $f_2$ are solutions to (\ref{order-1}) and
(\ref{order-0}), and $f_2$ such that $\Pi f_2 = 0$. Therefore
$(F^\varepsilon, \bE^\varepsilon) \in {\mathcal C}^{k+1}_c\times  {\mathcal C}^{k+1}$  satisfies the Vlasov-Poisson
system    (\ref{V_varepsilon_f}) with a  source term $(R^\varepsilon)_{\varepsilon>0}$
$$
\left\{
\begin{array}{l}
\displaystyle \varepsilon\,\frac{\partial F^\varepsilon}{\partial t}\,+\,\vv \cdot \nabla_\xx F^\varepsilon \,+\,
\left(\bE^\varepsilon\,+\,\frac{b}{\varepsilon}\vv^\perp\right) \cdot
\nabla_\vv F^\varepsilon\,=\, -\varepsilon \,R^\varepsilon,
\\
\,
\\
\ds -\Delta \phi^\varepsilon = \rho^\varepsilon = \int_{\RR^3} F^\varepsilon d\vv, \quad \bE^\varepsilon = -\nabla_\xx
\phi^\varepsilon,
\\
\,
\\
F^\varepsilon(0) \,=\, F(0)+\varepsilon \,f_1(0) +\varepsilon^2 \,f_2(0), 
\end{array}\right.
$$
with 
$$
R^\varepsilon \,=\, \varepsilon\,\left(\frac{\partial f_1}{\partial t}\,+\,\bE_P\cdot\nabla_\vv f_1\right)
\,+\,\varepsilon^2 \,\left(\frac{\partial f_2}{\partial t} + \,+\,\vv
  \cdot \nabla_\xx f_2 \,+\,\bE^\varepsilon\cdot\nabla_\vv f_2\right),
$$ 
such that  for all $k\geq 0$,
$$
\|R^\varepsilon\|_{H^k} \leq C \left[\, \varepsilon \left( \|F\|_{H^{k+2}} +
  \|\bE_F\|_{H^{k+2}} \right) \,+\,  \varepsilon^2 \left( \|F\|_{H^{k+3}} +
  \|\bE_F\|_{H^{k+3}} \right)\,\right].
$$
The second step is to establish a comparison principle on the
Vlasov-Poisson system  (\ref{V_varepsilon_f}) to prove the
convergence
$$
\|F^\varepsilon -  f^\varepsilon\|_{H^k} \leq C\,\|R^\varepsilon\|_{H^k}.
$$

\subsubsection*{\bf About existence, uniqueness and regularity of solutions to the limiting system
  (\ref{transport:F}).} 
The lack of estimates on the Lagrange
  multiplier $(P,\bE_P)$ is the main issue to prove existence of weak
  solutions. Unfortunately, the constraint introduced in the limiting
  system   (\ref{transport:F})  is not standard in kinetic theory and
  fluid mechanics since the constraint is nonlinear in the sense that the
  differential operator ${\mathcal T}_F $ applied to $P$ depends on the solution itself
  {\it via} the potential $\phi_F$
$$
{\mathcal T}_F P := v_\parallel \frac{\partial P}{\partial x_\parallel} 
\,-\,  \frac{\partial \phi_F}{\partial x_\parallel} \frac{\partial P}{\partial v_\parallel}.
$$ 
Therefore, we cannot simply eliminate the
  constraint by introducing an appropriate functional space. New
  estimates have to be established to fix this issue.

\subsubsection*{\bf About the long time behavior of the solution to the limiting system
  (\ref{transport:F}).} The constraint in the parallel direction to the
  magnetic field 
$$
v_\parallel \frac{\partial F}{\partial x_\parallel} 
\,-\,  \frac{\partial \phi_F}{\partial x_\parallel} \frac{\partial F}{\partial v_\parallel} = 0,
$$
is very unusual and may be very strong. Therefore,  it is not guaranteed that the limiting system
  (\ref{transport:F}) can describe accurately plasma turbulence from
  current and spatial gradients and a stability analysis of
  the particular solutions may be investigated. For instance, consider the
   limiting system (\ref{transport:F}) , with $b=1$ and periodic
  boundary conditions in space  with an external background $\rho^\varepsilon_0$ in the
  Poisson equation
$$
-\Delta \phi = \int_{\RR^3}
F d\vv - \rho_0,\quad {\rm with }\quad
\rho_0 = \frac{1}{{\rm m}(\TT^3)}\int_{\TT^3\times\RR^3}
F(0) d\vv\,d\xx.
$$
We choose $F \equiv {\mathcal
  G} (t,\xx_\perp)\, {\mathcal M}(w^2+v_\parallel)$, where ${\mathcal
  G}$ is solution to the guiding center equation
$$
\ds\frac{\partial {\mathcal G}}{\partial t}\,+ \,
\bU_\perp\cdot\nabla_{\xx_\perp} {\mathcal G} = 0,
$$ 
with $\bU_\perp=-\nabla^\perp\phi_F$ and $\mathcal M$ is an arbitrary
smooth and nonnegative function. Thus, we set  $(P,\bE_P)=(0,0)$ and
since  $(F,\bE_F)$ does not depend on $x_\parallel$, the constraint in
(\ref{transport:F}) is automatically satisfied and $(F,\bE_F)$ is
solution to   (\ref{transport:F}). An interesting question is the
stability of such a solution.

\subsubsection*{\bf About the numerical simulation of  (\ref{transport:F}).} 
Finally the numerical approximation of the limiting system
  (\ref{transport:F}) should be investigated to study the relevance of
  such a model. This system has a clear advantage from a
  numerical point of view since the stiffness due to the external
  magnetic field of the Vlasov-Poisson system  (\ref{V_varepsilon_f})
  has been removed and the fast variable $\theta\in (0,2\pi)$ is eliminated
  by averaging. However, the discretization of     (\ref{transport:F})
  is not straightforward due to the constraint in the parallel
  direction to the magnetic field and a specific investigation have to
  be done. One possibility is to follow the strategy  applied in fluid mechanics for
  the two dimensional incompressible Euler system \cite{euler2D}.   

\section*{Acknowledgments.}
Both authors wish to express their gratitude
to Eric Sonnendru\"ucker for fruitful discussions on this topic.   

This work has been supported by the Engineering and Physical Sciences
Research Council (EPSRC) under grant ref. EP/M006883/1, and by the National
Science Foundation (NSF) under grant RNMS11-07444 (KI-Net). 

This work has also been carried out within the framework of the EUROfusion Consortium and has received funding from the Euratom research and training programme 2014-2018 under grant agreement No 633053. The views and opinions expressed herein do not necessarily reflect those of the European Commission.

F. F. is grateful to the Department of Mathematics of Imperial College
of London for its hospitality.  P. D. is on
leave from CNRS, Institut de Math\'ematiques, Toulouse, France. He
acknowledges support from the Royal Society and the Wolfson foundation
through a Royal Society Wolfson Research Merit Award.


\begin{flushleft} 
\signPD 
\end{flushleft}
\begin{flushright} 
\signFF 
\end{flushright}

\end{document}